 \newcommand{\diag}{\operatorname{diag}}
\newcommand*\diff{\mathop{}\!\mathrm{d}}
 \newtheorem{Theorem}{Theorem}
 \newtheorem{Lemma}{Lemma}
 \newtheorem{Corollary}{Corollary}
  \newtheorem{Definition}{Definition}
 \newtheorem{Proposition}[Theorem]{Proposition}
 \newtheorem{Remark} {Remark}
 \newtheorem{Example} {Example}
\newcommand {\R}{\mathbb R}
\newcommand{\be}{\begin{equation}}
\newcommand{\ee}{\end{equation}}
\newcommand{\N}{\mathbb N_+}
\begin{document}

\begin{frontmatter}
\title{On Disturbance Propagation in Vehicular Platoons with Different Communication Ranges}
\thanks[footnoteinfo]{
The work of C. Wu and D. V. Dimarogonas was supported by Knut and Alice Wallenberg Foundation and the Swedish Research Council. 
This work was conducted when C. Wu was at the KTH Royal Institute of Technology.\\
The work of M. Zhang was supported in part by the National Natural Science Foundation of China under Grant 62033005 and Grant 62273270, in part by the Natural Science Foundation of Shaanxi Province under Grant 2023JC-XJ-17, and in part by the Natural Science Foundation of Sichuan Province under Grant 2022NSFSC0923, and in part by CAAI-Huawei MindSpore Open Fund under Grant CAAIXSJLJJ-2022-001A.\\
$*$  Corresponding author. \\
E-mail addresses:  \texttt{chengshuai.wu@gmail.com} (C. Wu), 
\texttt{mengzhang2009@xjtu.edu.cn} (M. Zhang),
\texttt{dimos@kth.se} (D. V. Dimarogonas)
}
 \author[First]{Chengshuai Wu},
 \author[Second]{Meng Zhang$^*$},
 \author[Third]{Dimos V. Dimarogonas}
\address[First] {School of Automation Science and Engineering, Xi'an Jiaotong University, Xi'an 710049, China.
}
\address[Second]{School of Cyber Science and Engineering, Xi'an Jiaotong University, Xi'an 710049, China.}
\address[Third] {School of Electrical Engineering and Computer Science, KTH Royal Institute of Technology and Digital Futures, Stockholm  10044, Sweden.
}

\maketitle

\begin{abstract}
In the control of vehicular platoons, the disturbances acting on one vehicle can propagate and affect other vehicles. If the disturbances do not amplify along the vehicular string, then it is called string stable.  However, it is usually difficult to achieve string stability with a distributed control setting, especially when a constant spacing policy is considered.  
This note considers the string unstable cases and studies disturbance propagation in a nonlinear vehicular platoon consisting of $n+1$ vehicles where the (virtual) leading vehicle provides the reference for a constant spacing policy. Apart from the communications between consecutive vehicles, we also assume that each vehicle can receive information from $r$ neighbors ahead, that is, the vehicular platoon has communication range $r$.
Inspired by existing control protocols, a unified distributed nonlinear control law with a variable $r$ is proposed to facilitate the analysis.
For the maximal overshoot of the inter-vehicular spacing errors, we explicitly show that the effect of disturbances, including the external disturbances acting on each vehicle and the acceleration of the leading vehicle, is scaled by $O(\sqrt{ \left \lceil \frac{n}{r} \right \rceil})$ for a fixed $n$. 
This implies that disturbance propagation can be reduced by increasing communication range. 
Numerical simulation is provided to illustrate the main results. 
\end{abstract}

 \begin{keyword}
Platoon, contraction, matrix measure, input-to-state stability, string stability. 
  \end{keyword}

\end{frontmatter}

\section{Introduction}
Vehicular platooning is an intelligent transportation application which allows a string of vehicles to move in a closely spaced manner. This can decrease the aerodynamic drag and increase the traffic throughput \citep{varaiya1993smart}. Centralized control is impractical when the vehicular platoons are large in length considering the limited wireless communication range and the delay in transmission. Therefore, distributed control protocols are considered (see e.g., \cite{zheng2015stability,herman2017disturbance,peters2014leader}). However, the distributed design leads to the so called disturbance propagation in vehicular platoons. 

Many works consider a safety and performance criterion related to disturbance propagation called \emph{string stability} (see the recent tutorial \citep{feng2019string}). There are various definitions for string stability in existing works, but they all assert similar properties that the effect of disturbances does not amplify along the vehicular string and is independent of the platoon length. Depending on where the disturbances under concern act on, string stability w.r.t. the external input (e.g., acceleration) of the leading vehicle~\citep{ploeg2013lp}, and string stability w.r.t. the external disturbances on each vehicles~\citep{besselink2017string, monteil2019string} can be defined. Depending on which kind of signal norms is utilized to characterize the effect of the disturbances, there are mainly two notions, i.e., $\mathcal{L}_2$ string stability \citep{ploeg2013lp, studli2018vehicular,herman2016scaling} and $\mathcal{L}_\infty$ string stability \citep{ swaroop1996string,besselink2017string, monteil2019string}. Specifically, $\mathcal{L}_2$ string stability can be specified using $H_\infty$ norms of the transfer functions related to the dynamics of each vehicles. The characterization of $\mathcal{L}_\infty$ string stability usually relies on Lyapunov methods \citep{swaroop1996string,besselink2017string}. Recently, \cite{monteil2019string} showed that $\mathcal{L}_\infty$ string stability can be elegantly specified based on contraction theory \citep{sontag_cotraction_tutorial}. This method is extended in~\citep{silva2021string} where a disturbance rejection control design is proposed. 

It is known that string stability of vehicular platoons with constant spacing policy can be achieved if the leading vehicle can broadcast its information to all the other vehicles (see e.g.,~\cite{monteil2019string,besselink2017string}). However, it is not practical when the platoon length is large and the broadcast delays should be considered~\citep{peters2014leader}. 
Another solution to string stability is using time headway policies which rely on absolute velocity information. For example, \cite{knorn2013string} considered a vehicular platoon where each vehicle can communicate with two preceding neighbors and showed it is string stable provided a sufficiently large time headway. As a drawback of time headway policies, undesirable large steady-state inter-vehicular distances can occur \citep{herman2017disturbance}.
 
Despite the nice properties ensured by string stability, there are many negation results suggest that it is usually difficult to achieve string stability especially when the constant spacing policy is considered. For example, \cite{seiler2004disturbance,barooah2005error} showed that vehicular platoons with double integrator dynamics using only relative spacing information and constant spacing policies are always $\mathcal{L}_2$ string unstable for any linear controller.
\cite{inforflow2006} showed that string stability cannot be guaranteed for non-cyclic vehicular platoons with constant spacing policies if the communication range is limited (see also \citep{inforflow2005acc}). Specifically, they proved that string stability can be ensured if at least one vehicle in the platoon has a specified large enough communication range related to the platoon length.
\cite{peters2016cyclic} showed that 
cyclic vehicular platoons with constant spacing policies using only the information of the predecessor are not even stable when the platoon length is large enough. 
Recently, \cite{farnam2019toward} provided an impossibility result for string stability which covers a broad range of control designs. 

For the string unstable cases, analyzing the effect of disturbance propagation is of both theoretical and practical interest. For linear vehicular platoons, this can be analyzed by a transfer function matrix that relates the vector of all external disturbances to the vector of all spacing errors (see e.g., \cite{seiler2004disturbance, inforflow2006}). 
Lyapunov methods for studying disturbance propagation have also been reported (see e.g., \cite{li2019complex, herman2017disturbance}), and the results are essentially on the convergence rate of the platoon to a desired spacing configuration since disturbance propagation can be reduced by increasing the convergence rate. 

Unlike the aforementioned works, the disturbance propagation problem studied in this paper is formulated by the 
relation between the disturbances and the maximal overshoot of the inter-vehicular spacing errors. Note that if this relation is independent of the platoon length, then a $\mathcal{L}_\infty$ string stability-like result is achieved. Similar to \citep{inforflow2006,li2019complex}, we study 
vehicular platoons with variable communication ranges, that is, each vehicle can receive information from certain amount of neighbors ahead. 
Compared to the aforementioned works which focus on linear control protocols,
our main contribution is related to a contraction theory framework which allows to analyze nonlinear vehicular platoons and can provide explicit bound for the maximal overshoot of spacing errors subject to 
external disturbances acting on each vehicle and the acceleration of the leading vehicle. 
For a vehicular platoon consisting of $n+1$ vehicles with communication range $r$, we show that the effect of disturbances on the aforementioned bound is scaled by $O(\sqrt{ \left \lceil \frac{n}{r} \right \rceil})$ for a fixed $n$. 
The analysis is conducted for a proposed unified nonlinear control protocol which allows considering a variable $r$. We believe that the proposed framework can also be applied to other 
platooning protocols, especially nonlinear ones, since it essentially only requires that the platooning systems satisfy certain contractive properties. To demonstrate this, a distributed heterogeneous control design is given in Section~\ref{sec:Lstring}, with which the proposed framework shows that $\mathcal{L}_\infty$ string stability can be achieved.

The remainder of this paper is organized as follows. The next section reviews some basic notions and results in contraction theory. 
Section~\ref{sec:profor} presents the configuration and modeling of the studied  vehicular platoons and a unified control protocol for different communication ranges.
The main result is given in Section~\ref{sec:main}. Section~\ref{sec:Lstring} describes a byproduct of the main result, i.e., a  $\mathcal{L}_\infty$ string stability result. 
Simulation analysis is provided in Section~\ref{sec:sim}.

\section{Preliminaries}

Let $|\cdot|: \R^n \to \R_{\geq 0}$ denote a vector norm. The induced matrix norm  $||\cdot||: \R^{n \times p } \to \R_{\geq 0}$ is $||A|| :=  \max_{|x| = 1} |Ax|$. The  matrix measure $\mu: \R^{n \times n } \to \R $, associated with $|\cdot|$, is  
$
\mu(A):=\lim_{\varepsilon \rightarrow 0^+}
\frac{|| I_n +\varepsilon A||-1}{\varepsilon}.
$
For $p \in \{1, 2, \infty\}$, let $\mu_p$ [$||\cdot||_p$] denote the matrix measure [matrix norm] associated with the $p$-norm $|\cdot|_p$.
For any measurable vector function $\theta : \R_{\geq 0} \to \R^n$ and a vector norm $|\cdot|$, define $||\theta||: = \operatorname{ess.sup}_{t \geq 0} |\theta(t)|$, i.e., the signal $\mathcal{L}_\infty$ norm induced by the vector norm $|\cdot|$. For example, we use $||\theta||_\infty$ to denote the signal $\mathcal{L}_\infty$ norm induced by $|\cdot|_\infty$.
Note that although $||\cdot||$ is used for both the signal $\mathcal{L}_\infty$ norm and the induced matrix norm, the ambiguity is avoided due to the different representations of the arguments, that is, we always use uppercase letters to represent matrices and lowercase letters to represent vector functions.
For a signal $u(t) \in \R^n$ defined on $[0, T)$ and for some $\tau \in [ 0, T)$, define the related truncation signals: 
\begin{align*}
u_\tau(t) = 
\begin{cases}
u(t), & t \in [0, \tau], \\
0,  & t \in  (\tau, T).
\end{cases} \\
u^\tau(t) = 
\begin{cases}
0, & t \in [0, \tau), \\
u(t),  & t \in [\tau, T).
\end{cases}
\end{align*}
Then, for $0< \tau_1 < \tau_2 < T$, a truncation on the time period $ [\tau_1, \tau_2]$ of $u(t)$  is denoted $u_{\tau_2}^{\tau_1}$. 

Next we review some basic results on contractive systems \citep{sontag_cotraction_tutorial}. Consider the nonlinear time-varying system:
\be\label{eq:tpds}
\dot x(t)=f(t,x(t)),
\ee
where~$f: \R_{\geq 0} \times \Omega \to \R^n$ is $C^1$ and~$\Omega$ is a convex subset of~$\R^n$. 
For~$t\geq t_0 \geq 0$ and~$x_0 \in\Omega$,
we assume that~\eqref{eq:tpds} admits a unique solution~$x(t,t_0,x_0)$  for all~$t\geq t_0$, and that
$x(t,t_0,x_0)\in\Omega $ for all~$t\geq t_0$.
From here on we always take~$t_0=0$ and write~$x(t,x_0)$ for~$x(t,0,x_0)$.  

The system~\eqref{eq:tpds} is called contractive if there exist 
a vector norm~$|\cdot|:\R^n\to\R_{\geq 0} $ and~$\eta>0$ such that for any~$a,b\in \Omega$, 
\be\label{eq:ab}
|x(t,a)-x(t,b)|\leq \exp(-\eta t)|a-b|, \text{ for all } t\geq 0. 
\ee
In other words, any two trajectories converge to each other at an exponential rate. Hence, all the trajectories eventually converge to a unique steady-state solution. 

Let~$J(t,x):=\frac{\partial }{\partial x}f(t,x)$ denote the Jacobian of the vector field~$f$ with respect to~$x$. 
A sufficient condition \citep{entrain2011} guaranteeing~\eqref{eq:ab} is that the matrix measure~$\mu(\cdot):\R^{n\times n}\to \R $, associated with~$|\cdot|$, satisfies
\be \label{eq:mmeta}
\mu(J(t,x))\leq -\eta, \text{ for all }  t\geq 0, x\in\Omega. 
\ee

The next result given by \cite{Desoer1972} shows that a contractive system with an additive input is~input-to-state stable (ISS) \cite[Def. 2.1]{sontag1989smooth}. 

\begin{Proposition} \label{prop:iss}
Consider the system~\eqref{eq:tpds} with an input $u(t) \in \R^n $, i.e., 
\be \label{eq:coninput}
\dot x(t)=f(t,x(t)) +u(t) ,
\ee
where $u: \R_{\geq 0} \to \R^n$ is piecewise continuous. Assume that condition \eqref{eq:mmeta} holds with $\Omega = \R^n$, and $f(t, 0) = 0$ for all $t \geq 0$. Then, 
\be \label{eq:isscon}
|x(t,x_0)|\leq \exp(-\eta t)|x_0|+\int_0^t \exp(-\eta(t-s) ) |u(s)| \diff s, 
\ee
for all $t \geq 0$.
That is, there exists a class $\mathcal{KL}$ function $\beta : \R_{\geq 0} \times \R_{\geq 0}  \to \R_{\geq 0}$ and a class $\mathcal{K}$ function $\gamma: \R_{\geq  0} \to \R_{\geq 0} $ such that 
\be \label{eq:iss}
|x(t,x_0)| \leq \beta( | x_0 | , t)  + \gamma( ||u_t||),  \text{ for all } t \geq 0,
\ee
where $\beta(s, t) = \exp(-\eta t) s$ and $\gamma(s) = s
/ \eta$.
\end{Proposition}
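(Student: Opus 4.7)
The plan is to reduce the perturbed nonlinear system to a linear time-varying system along the trajectory, and then apply the classical Coppel/Desoer--Vidyasagar estimate on the evolution of the norm.

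First, I would exploit the assumption $f(t,0)=0$ together with the fundamental theorem of calculus to write
\begin{equation*}
f(t,x) \;=\; \left(\int_0^1 J(t,sx)\,\diff s \right) x \;=:\; A(t,x)\,x,
\end{equation*}
so that \eqref{eq:coninput} becomes $\dot x(t) = A(t,x(t))\,x(t) + u(t)$. Using convexity/subadditivity of the matrix measure under integration against a probability measure, together with \eqref{eq:mmeta} on $\Omega=\R^n$, one obtains
\begin{equation*}
\mu\!\left(A(t,x(t))\right) \;\leq\; \int_0^1 \mu\!\left(J(t,sx(t))\right)\,\diff s \;\leq\; -\eta
\end{equation*}
for all $t\geq 0$.

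Second, I would invoke the standard Coppel-type inequality (see \cite{Desoer1972}): for any system $\dot z = B(t)z + v(t)$, the upper right Dini derivative of $|z(t)|$ satisfies $D^+ |z(t)| \leq \mu(B(t))\,|z(t)| + |v(t)|$. Applying this with $B(t)=A(t,x(t))$ and $v(t)=u(t)$ yields $D^+|x(t)| \leq -\eta\,|x(t)| + |u(t)|$. Multiplying by the integrating factor $e^{\eta t}$ and integrating from $0$ to $t$ gives the estimate \eqref{eq:isscon} directly.

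Finally, to obtain the ISS form \eqref{eq:iss}, I would bound $|u(s)|\leq \|u_t\|$ on $[0,t]$ in the integral of \eqref{eq:isscon}:
\begin{equation*}
\int_0^t e^{-\eta(t-s)}|u(s)|\,\diff s \;\leq\; \frac{1-e^{-\eta t}}{\eta}\,\|u_t\| \;\leq\; \frac{\|u_t\|}{\eta}.
\end{equation*}
Setting $\beta(s,t)=e^{-\eta t}s$ (class $\mathcal{KL}$) and $\gamma(s)=s/\eta$ (class $\mathcal{K}$) finishes the argument. The only delicate point I expect is the rigorous derivation of the Dini-derivative inequality for merely piecewise continuous $u$, which requires care at the discontinuity points of $u$; however, this is classical and can simply be cited from \cite{Desoer1972}. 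The rest reduces to routine manipulation of the scalar comparison lemma.
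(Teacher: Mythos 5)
Your proof is correct and is essentially the paper's own route: the paper offers no derivation beyond citing Desoer and Haneda (1972), and your argument --- factoring $f(t,x)=\bigl(\int_0^1 J(t,sx)\,\diff s\bigr)x$ via $f(t,0)=0$ on the convex domain $\R^n$, using sublinearity of $\mu$ to conclude $\mu(A(t,x(t)))\le-\eta$, and applying Coppel's inequality $D^+|x(t)|\le-\eta|x(t)|+|u(t)|$ with an integrating factor --- is exactly the classical derivation that citation stands for. Your final estimate $\int_0^t \exp(-\eta(t-s))|u(s)|\,\diff s\le(1-\exp(-\eta t))\Vert u_t\Vert/\eta\le\Vert u_t\Vert/\eta$ recovers \eqref{eq:iss}, and the intermediate sharper factor is precisely the refinement the paper records separately in Remark~\ref{re:betterbound}.
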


\begin{Remark} \label{re:betterbound}
By computing the integral in~\eqref{eq:isscon}, we have
\[
|x(t,x_0)|\leq \exp(-\eta t)|x_0|+ \frac{1 -\exp(-\eta t)}{\eta} ||u_t||,
\]
for all $t \geq 0$. Note that this offers a better bound on $|x(t,x_0)|$ compared to~\eqref{eq:iss}, but the subsequent analysis will exploit the latter for simplicity.   
\end{Remark}

\section{Problem formulation} \label{sec:profor}

Consider the longitudinal formation control problem of a vehicular string consisting of $(n+1)$ vehicles. The position and velocity of the $i$th vehicle is denoted $p_i(t)$ and $v_i(t)$, $i = 0, 1, \dots, n$, respectively. The state of the leading vehicle $(p_0(t), v_0(t))$ provides a reference for the platoon. The dynamics of the $i$th vehicle, $i = 1, \dots, n$, is described as a second order linear system:
\be \label{eq:platoon}
\begin{aligned}
\dot p_i & =  v_i, \\
  m_i \dot v_i & = u_i +  \theta_i,
\end{aligned}
\ee
where $m_i$ is the mass of the $i$th vehicle, $\theta_i(t)$ denotes the external disturbance, and $u_i(t)$ is the control input of the $i$th vehicle. Note that the 
longitudinal dynamics of vehicles is typically modeled as a nonlinear system, and \eqref{eq:platoon} can be obtained via feedback linearization (see e.g. \cite{stankovic2000decentralized}).

The control goal is to maintain certain designed inter-vehicular distances for the platoon, i.e., the constant spacing policy. 
To formalize this, define a new state $x \in \R^n$ whose $i$th entry is
\[
x_i := p_{i-1} - p_i, \quad i = 1, \dots, n,
\]
i.e., the $i$th inter-vehicular distance. Let $e \in \R^n$ be a vector with positive entries, and its $i$th entry $e_i$, $i = 1, \dots, n$, denotes the desired $i$th inter-vehicular distance.  

As shown in \citep{ploeg2013lp,besselink2017string, monteil2019string}, both the external disturbances acting on each vehicle and the acceleration of the leading vehicle, i.e., $\theta_i$'s and $\dot v_0$, affect the inter-vehicular spacing errors $x_i-e_i, i = 1, \dots, n$. This work seeks to analyze the effects of $\theta_i$'s and $\dot v_0$ simultaneously. 
To facilitate this, we exploit a set of decentralized tracking controllers such that the velocity of the $i$th vehicle can track a desired value $v_{di}(t)$ given by
\begin{align}
v_{di}(t) & := d_i(x_i(t), x_{i+1}(t)) + v_0(t),~ i= 1, \dots, n-1, \nonumber \\ 
v_{dn}(t) &:= d_n(x_n(t)) + v_0(t) . \label{eq:defvdi}
\end{align}
The mappings $d_{i}$, $i = 1, \dots, n$, are $C^1$ w.r.t the arguments and satisfy $d_i(e_i, e_{i+1}) = 0$, $i = 1, \dots, n-1$, and $d_n(e_n) = 0$. That is, the desired velocity of the platoon is $v_0(t)$ when the desired configuration $x = e$ is achieved. Throughout the note (except Section~\ref{sec:Lstring}) we consider a set of $d_i$'s such that there exist positive constants $\eta_1 $ and $c$,  
\begin{align}  \label{eq:cntj}
& \frac{\partial d_i}{\partial x_i} > 0, ~
 \frac{\partial d_i }{\partial x_{i+1}} <  0, ~
\frac{\partial d_i}{\partial x_i} + \frac{\partial d_i }{\partial x_{i+1}} \geq \eta_1 , \\
& \max \left \{  \left \vert \frac{\partial d_i}{\partial x_i  } 
\right \vert , 
\left \vert \frac{\partial d_i }{\partial x_{i+1}} \right \vert  \right \} \leq c,  \label{eq:cntj1}
\end{align}
for all $i = 1, \dots, n$, and all $x \in \R^n$. 
Note that $\frac{\partial d_n }{\partial x_{n+1}} = 0$.

\begin{Remark} The mappings $d_i$ can be viewed as a formation protocol \citep{mesbahi2010graph} for the platoon with the first order dynamics $\dot p_i = v_i$, $i = 1, \dots, n$. For example, we can design $d_i$'s as the 
standard linear formation protocol
\be \label{eq:lineard}
d_i(x_i, x_{i+1}) = \ell_i^p(x_i - e_i) - \ell_i^f(x_{i+1} - e_{i+1}),
\ee
where $\ell^p_i, \ell_i^f >0$, $i = 1, \dots, n$. Note that Conditions~\eqref{eq:cntj}-\eqref{eq:cntj1} hold in this case with $\ell_i^p > \ell_i^f $ for all $i$. 
\end{Remark}

We assume that consecutive vehicles can communicate information including relative positions and velocities. Additionally, each vehicle can receive information from $r$ neighboring vehicles ahead. In other words, the platoon has communication range $r \in \{1, \dots, n\}$. For practical applications, a large $r$ can cause time-delays in communication. The current work considers a ideal case without delays to simplify the analysis. We note however that time-delays could be considered in the suggested framework and are indeed a topic of future work.
 
Let $d_i =0$ and $v_i = v_0$ for all $i \leq 0$. We propose the following control protocol with $r \in \{1, \dots, n \}$ 
\be \label{eq:placon}
\begin{split} 
\frac{1}{m_i} u_i & = -k_i \left ( v_i - \sum \limits_{j=0}^{r-1} d_{i-j} - v_{i-r} \right ) + \frac{\partial d_i}{\partial x_i}  (v_{i-1} - v_i) \\  
& + \frac{\partial d_i}{\partial x_{i+1}}  (v_i - v_{i+1}), ~i=1, \dots, n,  
\end{split}
\ee
where $k_i > 0$, $i=1, \dots, n$, are the control gains. Note that the third term on the right side of~\eqref{eq:placon} vanishes for $i = n$.
 We can see from~\eqref{eq:placon} that Condition~\eqref{eq:cntj} is actually related to exploiting asymmetric weights on the feedback of $v_{i-1} - v_i$ and $v_i - v_{i+1}$, which is known to have the ability to enhance robustness \citep{monteil2019string,hao2012robustness}.
 
\begin{Remark}
The control design~\eqref{eq:placon} is inspired by some existing platooning protocols. For example, Eq.~\eqref{eq:placon} with $r = 1$ has the same structure as the control protocol given in \citep[Eq. (2)]{herman2017disturbance}, that is, each local controller only exploits relative information w.r.t. the immediate predecessor and follower. Eq.~\eqref{eq:placon} with $r = n$ is similar to the control design in \citep[Eq.~(6)]{monteil2019string}, with the distinction that the latter utilizes the feedback term on the relative position w.r.t. the leading vehicle, i.e, $p_0 - p_i - \sum_{j = 1}^i e_j$. In \eqref{eq:placon} with $r = n$, this term is replaced by the weighted version $\sum_{j=0}^{n-1} d_{i-j}$ (i.e., $\sum_{j=1}^i d_j$). Note that both terms correspond to a direct coupling from the leading to the $i$th vehicle.  
\end{Remark}

The next section details the main results, which provide an upper bound for the maximal overshoot of the inter-vehicular spacing errors, i.e., $\max_i\{|| x_i - e_i ||\}$. 

\section{Main results} \label{sec:main}

We first show that the closed-loop system of~\eqref{eq:platoon} with the control design~\eqref{eq:placon} can be reformulated as a singularly perturbed systems (see e.g. \cite[Ch. 11]{hkhalil2002}).  

Define a new state $z \in \R^n$ with its $i$th entry as 
\[ 
z_i  :=  v_i - v_{di},
\]
where $v_{di}$ is given in~\eqref{eq:defvdi}.
Let $k_{\min} : = \min_i\{k_i\}$ and write the control gains $k_i$'s as
$ k_i = \bar k_i k_{\min}, ~ i = 1, \dots, n.$
That is, $\bar k_i$ can be viewed as a relative control gain w.r.t. $k_{\min}$, and satisfies $ \bar k_i \geq 1$ for all $i \in \{ 1, \dots, n \}$.
Define a parameter $\epsilon :=  \frac{1}{k_{\min}} $ and let $z_i = 0$ for all $i \leq 0$. 
Then the closed-loop system of~\eqref{eq:platoon} with the control laws~\eqref{eq:placon} is
\be \label{eq:plaxz}
\begin{split}  
\dot x_i & = d_{i-1} - d_i + z_{i-1} - z_i,   \\
\epsilon \dot z_i & =  - \bar k_i  \left ( z_i - z_{i-r}  -  \sum \limits_{j=1}^r d_{i-j} \right) +     \epsilon \left (\frac{\theta_i}{m_i} - \dot{v}_0 \right), \\
\end{split}
\ee
with $i = 1, \dots, n$. For simplicity, we define the overall disturbance signal $w \in \R^n$ with its $i$th entry as
\[
w_i := \frac{\theta_i}{m_i} - \dot{v}_0, ~ i = 1, \dots, n.
\]
This definition is of practical interest since both $\theta_i$ and $\dot v_0$ affect the inter-vehicular spacing errors (see e.g., \citep{ploeg2013lp,besselink2017string, monteil2019string}).

Note that the parameter $\epsilon$ can be made arbitrarily small by exploiting a high gain design. This justifies that the above system is indeed a singularly perturbed system whose stability usually depends on the existence of a small enough $\epsilon$.  
By the standard procedure of singular perturbation, i.e., time scale decomposition, system~\eqref{eq:plaxz} can be decomposed into the so-called \emph{reduced system} and \emph{boundary layer system}. Specifically, let $\epsilon = 0$, then the $z$-system achieves the steady state $ h(x) \in \R^n$ instantly defined by 
$
h_i - h_{i-r}  - \sum \limits_{j=1}^r d_{i-j} = 0, ~i = 1, \dots, n,
$
where $h_i(x)$ denotes the $i$th entry of $h(x)$. For $i \leq 0$, we set $h_i = 0$. A straightforward calculation yields
\be \label{eq:defhi}
h_i = \sum_{j =1}^{i-1} d_j, ~ i = 1, \dots, n. 
\ee
For example, if $n=3$, then $h_1 = 0, h_2 = d_1$, and $h_3 = d_1+d_2$.
Define the error state
$
y:= z - h(x)
$,
and a fast time scale $\tau := \frac{t}{\epsilon}$. 
Then the system \eqref{eq:plaxz} in the $(x,y)$-coordinate is
\begin{align}
\dot x & = f(x) + G y,  \label{eq:xsys}\\
\frac{\diff y}{\diff \tau} & = A y  - \epsilon \frac{\partial h}{\partial x} (f(x) + G y)  + \epsilon w, \label{eq:ysys}
\end{align}
where $f := \begin{bmatrix} -d_1 & -d_2 & \cdots & -d_n \end{bmatrix}^T$.
Let $G_{ij}$ and $A_{ij}$ denote the $ij$th entry of  the $n \times n$ matrices $G$ and $A$, respectively. Then, 
\begin{align}
 G_{ij} &= \begin{cases}
-1, & i= j, \\
1,  & i = j+1,\\
0,& \text{otherwise}.
\end{cases} \\
 A_{ij} &= \begin{cases}
-\bar k_i, & i= j, \\
\bar k_i,  & i = j+r,\\
0,& \text{otherwise}.
\end{cases} \label{eq:defA}
\end{align}

Before the main theorem, we first give the next two instrumental results studying the matrix measures of the Jacobians of the two systems~\eqref{eq:xsys} and~\eqref{eq:ysys}, i.e., contractivity of~\eqref{eq:xsys} and~\eqref{eq:ysys}. 
The next result shows that the systems~\eqref{eq:xsys} is contractive if $y$ is treated as a parameter.

\begin{Lemma} \label{lem:x} 
 Define 
$
J(x) =  \frac{\partial }{\partial x}f(x). 
$
Condition~\eqref{eq:cntj} ensures that $\mu_\infty( J(x)) \leq  -\eta_1$ for all $x \in \R^n$. 
\end{Lemma}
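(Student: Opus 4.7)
My plan is to exploit the fact that $f(x)$ has a very simple sparse structure so that $J(x)$ is bidiagonal, which makes the $\infty$-norm matrix measure easy to compute directly.

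First I would write out the Jacobian explicitly. Since $d_i$ depends only on $x_i$ and $x_{i+1}$ for $i<n$, and $d_n$ depends only on $x_n$, the matrix $J(x)=\partial f/\partial x$ has only two possibly nonzero entries per row: $J_{ii}=-\partial d_i/\partial x_i$ on the diagonal, and $J_{i,i+1}=-\partial d_i/\partial x_{i+1}$ on the super-diagonal (with $J_{n,n+1}$ absent, consistent with the convention $\partial d_n/\partial x_{n+1}=0$). All other entries vanish.

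Next I would invoke the standard formula for the $\infty$-norm matrix measure,
\begin{equation*}
\mu_\infty(J)=\max_{1\leq i \leq n}\Bigl(J_{ii}+\sum_{j\neq i}|J_{ij}|\Bigr),
\end{equation*}
which for our bidiagonal $J$ collapses, on each row $i<n$, to $-\partial d_i/\partial x_i+|\partial d_i/\partial x_{i+1}|$. Using the sign conditions in~\eqref{eq:cntj} (namely $\partial d_i/\partial x_i>0$ and $\partial d_i/\partial x_{i+1}<0$), the absolute value opens as $-\partial d_i/\partial x_{i+1}$, so the row sum equals $-(\partial d_i/\partial x_i+\partial d_i/\partial x_{i+1})\leq -\eta_1$ by the third inequality of~\eqref{eq:cntj}. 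For the last row, the row sum is simply $-\partial d_n/\partial x_n$, and applying~\eqref{eq:cntj} with the convention $\partial d_n/\partial x_{n+1}=0$ gives $-\partial d_n/\partial x_n\leq -\eta_1$ as well. Taking the maximum over $i$ yields the claimed bound, uniformly in $x\in\R^n$.

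There is essentially no obstacle here: the argument is a one-line computation once the bidiagonal structure of $J$ is observed. The only subtlety worth flagging in the write-up is the boundary case $i=n$, where one must be explicit that the $\partial d_n/\partial x_{n+1}=0$ convention makes~\eqref{eq:cntj} reduce to $\partial d_n/\partial x_n\geq \eta_1$, so no separate assumption is required.
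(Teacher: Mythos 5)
Your proof is correct and follows exactly the route the paper indicates: the paper's entire justification is the one-line remark that $\mu_\infty(J) = \max_i\{J_{ii} + \sum_{j\neq i}|J_{ij}|\}$, and you have simply filled in the (straightforward) details of applying that formula to the upper-bidiagonal Jacobian, including the correct handling of the last row via the $\partial d_n/\partial x_{n+1}=0$ convention.
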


The proof is based on that $
\mu_\infty( J)  = \max_i\{  J_{ii}+ \sum_{j \neq i} |  J_{ij}| \}
$. 

The next result gives a sufficient condition such that the system~\eqref{eq:ysys} is contractive when $x$ is viewed as a parameter. 
Here, we denote $P \leq Q$ for $P, Q \in \R^{n \times n}$ if $P_{ij} \leq Q_{ij}$ for all $i, j \in \{1, \dots, n\}$.
For a symmetric matrix $P \in \R^{n \times n}$, we denote $P \succ [\succeq] 0$ 
if $P$ is positive [semi]definite. 

\begin{Proposition} \label{lem:y} 
Define 
$
\bar J(x) :=  A - \epsilon \frac{\partial }{\partial x}h(x) G. 
$
Given $a \in \R^n$ and decompose it as
\[
a = 
\begin{bmatrix}
a^1 \\
\vdots \\
a^m
\end{bmatrix}, ~ a^i \in \R^r,~ i = 1, \dots, m-1, a^m \in \R^{n - (m-1)r},
\]
where $m := \lceil \frac{n}{r} \rceil$.
Let $D \in \R^{m \times m}$ be a positive definite diagonal matrix. 
Define a vector norm $|\cdot|_*: \R^n \to \R_{\geq 0}$ by
\be \label{eq:normstar}
|a|_* := \left | \begin{bmatrix}
  |a^1|_\infty \\
\vdots \\
  |a^m |_\infty
\end{bmatrix} \right |_{D,2},
\ee
where $|\cdot|_{D,2}: \R^m \to \R_{\geq 0}$ denotes a $D$-weighted 2-norm defined by $|b|_{D,2}:= |D b|_2$ for $b \in \R^m$.
Let $\mu_*$ denote the matrix measure induced by $|\cdot|_*$. 
Conditions~\eqref{eq:cntj} and~\eqref{eq:cntj1} with some $c>0$ ensure that 
if
\be \label{eq:conlem2}
-\bar k_i + 2 \bar \epsilon c (r-1) < 0, ~ i = 1, \dots, n, \text{ for some } \bar \epsilon >0, 
\ee
then there exist $\eta_2 > 0$ and a $D$ such that 
\be \label{eq:lem2}
\mu_*(\bar J(x)) \leq -\eta_2, \text{ for all } \epsilon \in (0, \bar \epsilon], \text{ and all } x \in \R^n.
\ee
\end{Proposition}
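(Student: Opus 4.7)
The plan rests on two observations. First, $|\cdot|_*$ is a \emph{hierarchical} norm: $\infty$-norm inside each of the $m$ sub-blocks of size at most $r$ (call their index sets $I_1,\dots,I_m$), then the $D$-weighted $2$-norm across the resulting $m$ block-norms. For such norms the matrix measure admits the standard majorization
\[
\mu_*(\bar J(x)) \leq \mu_{D,2}\bigl(\tilde J(x)\bigr),
\]
where $\tilde J(x)\in\R^{m\times m}$ is built block-wise from $\bar J(x)$ via $\tilde J_{\ell\ell}(x) = \mu_\infty(\bar J^{\ell\ell}(x))$ and $\tilde J_{\ell\ell'}(x) = \|\bar J^{\ell\ell'}(x)\|_\infty$ for $\ell\neq\ell'$, where $\bar J^{\ell\ell'}$ is the $(\ell,\ell')$-th block of $\bar J$ in the induced partition. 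Second, $A$ has a very sparse block structure: $-\operatorname{diag}(\bar k_i)$ on the block diagonal, $\operatorname{diag}(\bar k_i)$ precisely on the first block sub-diagonal, and zero elsewhere. The task therefore reduces to (a) bounding $-\epsilon\tfrac{\partial h}{\partial x}G$ block-wise, and (b) exhibiting a diagonal $D$ that makes $\mu_{D,2}(\tilde J)$ strictly negative uniformly on $\epsilon\in(0,\bar\epsilon]$ and $x\in\R^n$.

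For (a), using $h_i=\sum_{j=1}^{i-1} d_j$ from~\eqref{eq:defhi} and the bidiagonal form of $G$, a direct computation yields
\[
\Bigl(\tfrac{\partial h}{\partial x}G\Bigr)_{ik} = \tfrac{\partial h_i}{\partial x_{k+1}} - \tfrac{\partial h_i}{\partial x_k},
\]
which vanishes for $k>i$, equals $-\tfrac{\partial d_{i-1}}{\partial x_i}$ for $k=i$ (hence has magnitude $\le c$ and a sign that only makes the diagonal of $\bar J$ more negative), and, for $k<i$, expands to four $\partial d_j/\partial x_\cdot$-terms that can be regrouped as the sum of two differences of like-signed quantities, each of magnitude $\le c$ by~\eqref{eq:cntj}--\eqref{eq:cntj1}. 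This gives the sharp bound $|(\tfrac{\partial h}{\partial x}G)_{ik}|\le 2c$ for $k<i$. Since each block has width at most $r$,
\begin{align*}
\mu_\infty\bigl(\bar J^{\ell\ell}(x)\bigr) &\le \max_{i\in I_\ell}\bigl(-\bar k_i + 2\epsilon c(r-1)\bigr), \\
\|\bar J^{\ell,\ell-1}(x)\|_\infty &\le \max_i \bar k_i + 2\epsilon c\, r, \\
\|\bar J^{\ell,\ell'}(x)\|_\infty &\le 2\epsilon c\, r \quad (\ell > \ell'+1),
\end{align*}
and $\bar J^{\ell\ell'}(x) = 0$ for $\ell<\ell'$. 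By~\eqref{eq:conlem2}, the diagonal of $\tilde J(x)$ is uniformly strictly negative on $\epsilon\in(0,\bar\epsilon]$, and its off-diagonal entries are bounded by constants independent of $\epsilon$ and $x$.

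For (b), since $\tilde J$ is lower triangular with uniformly negative diagonal, I pick $D=\operatorname{diag}(1,\lambda,\lambda^2,\dots,\lambda^{m-1})$ with $\lambda\in(0,1)$. The similarity $D\tilde J D^{-1}$ multiplies the $(\ell,\ell')$-entry by $\lambda^{\ell-\ell'}$, shrinking every strictly sub-diagonal entry geometrically. A Gershgorin bound on the symmetric part of $D\tilde J D^{-1}$ then expresses $\mu_{D,2}(\tilde J)$ as the (uniform) negative diagonal margin plus a convergent geometric series in $\lambda$ with coefficients independent of $\epsilon$ and $x$. Choosing $\lambda$ small enough for this series to be smaller than half the diagonal margin yields $\mu_*(\bar J(x))\le \mu_{D,2}(\tilde J)\le -\eta_2$ for some $\eta_2>0$, which is~\eqref{eq:lem2}.

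The main obstacle is the sharp estimate $|(\tfrac{\partial h}{\partial x}G)_{ik}|\le 2c$ in (a): the naive componentwise bound is $4c$ and would force a stronger hypothesis than~\eqref{eq:conlem2}. The correct pairing of terms, using \emph{both} signs in~\eqref{eq:cntj}, is essential. Everything else is a textbook Lyapunov construction for a triangular matrix with a stable diagonal.
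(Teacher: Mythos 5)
Your proposal is correct and follows essentially the same route as the paper: the same $m\times m$ block partition of $\bar J$, the same Str\"om-type majorization $\mu_*(\bar J)\le\mu_{D,2}(B)$ with $B$ built from $\mu_\infty$ of the diagonal blocks and $\|\cdot\|_\infty$ of the off-diagonal blocks, the same sharp bound $|(-\epsilon\tfrac{\partial h}{\partial x}G)_{ij}|\le 2\epsilon c$ obtained by pairing like-signed terms, and the same observation that $B$ is lower triangular with uniformly negative diagonal under~\eqref{eq:conlem2}. The only (minor) divergence is the last step: where the paper invokes diagonal stability of Hurwitz Metzler matrices abstractly (after passing to a constant majorant $\bar B$ via monotonicity of $\mu_{D,2}$), you construct $D=\operatorname{diag}(1,\lambda,\dots,\lambda^{m-1})$ explicitly and close with a Gershgorin estimate — a constructive instance of the same fact.
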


\begin{proof}
Consider a $m \times m$ partition of $\bar J(x)$ where the $ij$th block is denoted $\bar J^{ij}(x) \in \R^{n_i \times n_j}$ with $n_i = r$ for $i \in \{1, \dots, m-1\}$, and $n_m = n - r(m -1) $. Define $B(x) \in \R^{m \times m}$, with its $ij$th entry denoted $B_{ij}(x)$,  by
\be \label{eq:defB}
B_{ij}(x) := 
\begin{cases}
 \mu_\infty(\bar J^{ij} (x)), & i = j, \\
 || \bar J^{ij}(x)||_\infty, & i \neq j.
\end{cases}
\ee
Note that $B(x)$ is Metzler.
Let $\mu_{D,2}$ denote the matrix measure induced by $|\cdot|_{D,2}$. 
By virtue of $|\cdot|_{D,2}$ being monotonic, \cite{storm1975} proved that (see also \cite[Thm. 2]{netwok_contractive})
\be \label{eq:musj1}
\mu_*(\bar J(x)) \leq \mu_{D,2}(B(x)), \text{ for all } x \in \R^n.
\ee
Furthermore, Condition~\eqref{eq:cntj1} ensures that $B(x)$ is bounded for any $x \in \R^n$. That is, there exists a constant Metzler matrix $\bar B \in \R^{m \times m}$ such that $B(x) \leq \bar B$ for all $x \in \R^n$. By \cite[Prop. 2]{ofir2022minimum} (see also a similar result \cite[Thm. 3.3]{Jafar2022nonEuc}), we have
\be \label{eq:musj2}
 \mu_{D,2}(B(x)) \leq \mu_{D,2}(\bar B), \text{ for all } x \in \R^n.
\ee
A straightforward calculation shows that the matrix $F(x):= - \epsilon \frac{\partial }{\partial x}h(x) G$ is a lower triangular matrix satisfying 
\be \label{eq:Fxeps}
|F_{ij}(x)| \leq 2 \epsilon c, \text{ for all } i > j. 
\ee
Recall~\eqref{eq:defA}, then the matrices $\bar J(x)$  and $B(x)$ are also lower triangular. The diagonal entries of $\bar J(x)$ are
\[
-\bar k_1, ~\epsilon \frac{\partial d_1}{\partial x_2} - \bar k_2, \dots,  ~\epsilon \frac{\partial d_{n-1}}{\partial x_n} - \bar k_n. 
\]
Note that they are all negative for any $\epsilon >0$ due to Condition~\eqref{eq:cntj}.
Therefore, Condition~\eqref{eq:conlem2} ensures that there exists $c_1>0$ such that if $\epsilon \in (0, \bar \epsilon]$, then
\[
\mu_\infty(\bar J^{ii}(x)) < -c_1, \text{ for all } i \in \{1, \dots, m\}, ~x \in \R^n.
\]
Since $B(x)$ is lower triangular, its eigenvalues are the diagonal entries, i.e., $\mu_\infty(\bar J^{ii}(x))$, $i = 1, \dots, m$.
Therefore, $B(x)$ is Hurwitz for all $x$, and thus there exists a lower triangular and Hurwitz $\bar B$. 
It is well known that a Hurwitz and Metzler matrix is diagonally stable, i.e., there exists a diagonal matrix $D \succ 0$ and $\eta_2 > 0$ such that 
$
D^2 \bar B + \bar B^T D^2 \preceq -2\eta_2 I_m
$.
This implies that 
$
\mu_{D,2}(\bar B) = \mu_2(D \bar B D^{-1}) \leq -\eta_2
$. 
Then~\eqref{eq:lem2} follows from~\eqref{eq:musj1} and~\eqref{eq:musj2}. $~~~~ \square   $
\end{proof}

Note that the parameter $\eta_2$, i.e., the contraction rate of the system~\eqref{eq:ysys}, depends on the communication range $r$ implicitly since the vector norm $|\cdot|_*$ in \eqref{eq:normstar} is related to $r$.
The next example suggests that a larger $\eta_2$ can be achieved by increasing $r$. 

\begin{Example}  \label{exa:1}
Let $k_1 = \cdots = k_n$, i.e., $\bar k_1 = \cdots = \bar k_n = 1$. Consider the extreme case $\epsilon = 0$, then $\bar J(x) = A$ and the matrix $B(x) \in \R^{m \times m}$ in~\eqref{eq:defB} is given by 
\[
B_{ij} := 
\begin{cases}
 -1, & i = j, \\
1 , & i = j+1, \\
0, & \text{otherwise}.
\end{cases}
\]  
The symmetric part of $B$, i.e., $B_{\operatorname{sym}}:= (B+ B^T)/2$, is an~$m\times m $ tridiagonal matrix with~$-1$ on the main diagonal, and~$1/2$ on the sub- and super-diagonals. 
\cite{Yueh2005EIGENVALUESOS} showed that the eigenvalues of such a matrix  
are
$
\lambda_i=-1+\cos \Big(\frac{i \pi}{m+1} \Big), ~i=1,\dots,m
$.
Therefore, 
\be \label{eq:mu2b}
\mu_2(B) = \lambda_{\operatorname{max}}(B_{\operatorname{sym}}) = -1+\cos \left( \frac{\pi} {m+1} \right)<0.
\ee
That is, the matrix $D$ in this case can be selected as $I_m$. Eq.~\eqref{eq:mu2b} implies that $\mu_2(B)$ is increasing w.r.t. $m$ if $\epsilon =0$. 
For a small enough $\epsilon >0$, this property still holds since $B(x)$ depends on $\epsilon$ continuously and $\mu_2(B(x))$ is continuous w.r.t. the entries of $B(x)$.  
Recall that $m := \lceil \frac{n}{r} \rceil$. This shows that a larger communication range $r$ can lead to a smaller $\mu_2(B)$, that is, a larger $\eta_2$.
\end{Example}

\begin{Remark}
Note that Lemma~\ref{lem:x} and Proposition~\ref{lem:y} show that the system~\eqref{eq:xsys}-\eqref{eq:ysys} can be viewed as an interconnection of two contracive systems, or two ISS systems by Proposition~\ref{prop:iss}. 
It is well known that the small gain theorem can be applied in this case. However, it requires that the term $ \frac{\partial h}{\partial x} f(x)$ in \eqref{eq:ysys} is bounded for all $x \in \R^n$. For the incremental version of small gain theorem~\citep{netwok_contractive}, it requires that $ \frac{\partial}{\partial x} \left(\frac{\partial h}{\partial x} f(x) \right )$ is bounded for all $x \in \R^n$. Note that these two boundedness conditions are not required in this current work. 
\end{Remark}

Based on the above results, the main theorem below shows that
the platooning system~\eqref{eq:platoon} with the control protocol~\eqref{eq:placon} is semi-globally stable. The argument is related to finding a small enough $\epsilon$ (i.e., a large enough $k_{\operatorname{min}}$). It is inspired by \cite[Thm. 1]{christofides1996singular}, which is a Lyapunov method based on implicit constructions. In our case, these implicit constructions can be made explicit by contraction theory. 
To facilitate the analysis, define
\[
\begin{split}
||G||_{\infty,*} := \max \limits_{ a \neq  0}\frac{ |G a|_\infty}{ |a|_*}, ~ a \in \R^n.
\end{split}
\]

\begin{Theorem} \label{thm:main}
Consider the $(x,y)$-system~\eqref{eq:xsys}-\eqref{eq:ysys} where the mappings $d_i$ satisfy Conditions~\eqref{eq:cntj}-\eqref{eq:cntj1}, and the parameters $\bar k_i$ satisfy Condition~\eqref{eq:conlem2} for some $\bar \epsilon >0$.
For each pair of positive constants $(\delta, \sigma)$, there exists $\epsilon^* \in (0, \bar \epsilon]$, such that if $$\max\{ |x(0)-e|_\infty, |y(0)|_*, ||w||_*\} \leq \delta$$ and $\epsilon \in (0, \epsilon^*]$, then
\begin{align}
\label{eq:thmx}
|x(t)-e|_\infty
 &\leq \exp(-\eta_1 t) |x(0)-e|_\infty   \nonumber \\
&~+ \frac{\epsilon ||G||_{\infty,*} ||w_t||_*}{\eta_1 \eta_2}   + \sigma, \\
|y(t)|_*  \leq & \exp \left(-\frac{\eta_2 t} {\epsilon} \right)  |y(0)|_* + \frac{  \epsilon ||w_t||_*}{\eta_2} + \sigma,   \label{eq:thmy}
\end{align} 
for all $ t \geq 0$. The parameters $\eta_1$ and $\eta_2$ are specified in Condition~\eqref{eq:cntj} and Proposition~\ref{lem:y}, respectively.
\end{Theorem}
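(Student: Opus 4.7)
The plan is to apply Proposition~\ref{prop:iss} separately to the fast $y$-subsystem (in the time scale $\tau := t/\epsilon$) and to the shifted slow $x$-subsystem ($\tilde x := x - e$), and then to close the two ISS estimates via a small-gain/bootstrap argument that becomes valid once $\epsilon$ is sufficiently small. The key ingredients are Lemma~\ref{lem:x} and Proposition~\ref{lem:y}, which give $\mu_\infty(J(x)) \leq -\eta_1$ and $\mu_*(\bar J(x)) \leq -\eta_2$ uniformly in $x \in \R^n$, together with the observation that the interconnection signals $Gy$ (in $\dot x$) and $-\epsilon\,\frac{\partial h}{\partial x}f(x) + \epsilon w$ (in $\mathrm{d}y/\mathrm{d}\tau$) both vanish at the equilibrium $(x,y)=(e,0)$ when $w\equiv 0$.

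I would first rewrite the $y$-equation as $\mathrm{d}y/\mathrm{d}\tau = \bar J(x(\tau))\,y + g(x(\tau)) + \epsilon w(\tau)$ with $g(x) := -\epsilon\,\frac{\partial h}{\partial x}(x)\,f(x)$, treat $x(\cdot)$ as an exogenous signal so that the linear-in-$y$ part satisfies $F(\tau,0) = 0$, and then apply Proposition~\ref{prop:iss} in fast time. Conditions~\eqref{eq:cntj}-\eqref{eq:cntj1} together with the explicit formula $h_i = \sum_{j=1}^{i-1} d_j$ yield global Lipschitz bounds on $f$ and on $\partial h/\partial x$ of order $c$, and $f(e) = 0$; hence $|g(x)|_* \leq \epsilon\,M_1\,|x-e|_\infty$ for a constant $M_1 = M_1(c,n,r,D)$, producing the intermediate bound
\[
|y(t)|_* \leq \exp(-\eta_2 t/\epsilon)|y(0)|_* + \frac{\epsilon M_1}{\eta_2}\,V(t) + \frac{\epsilon}{\eta_2}\|w_t\|_*,
\]
where $V(t) := \sup_{s\leq t}|x(s)-e|_\infty$.

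Next, I would apply Proposition~\ref{prop:iss} to $\dot{\tilde x} = f(\tilde x + e) + Gy$ with $f(e) = 0$, use $|Gy|_\infty \leq \|G\|_{\infty,*}|y|_*$, substitute the above $y$-bound, and estimate the two convolutions $\int_0^t \exp(-\eta_1(t-s))\exp(-\eta_2 s/\epsilon)\mathrm{d}s \leq \epsilon/(\eta_2-\eta_1\epsilon)$ and $\int_0^t \exp(-\eta_1(t-s))\mathrm{d}s \leq 1/\eta_1$. Taking the supremum in $t$ yields an implicit inequality in $V(t)$ with self-coupling coefficient $\epsilon M_1\|G\|_{\infty,*}/(\eta_1\eta_2)$; choosing $\epsilon$ small enough that this coefficient is at most $1/2$ closes the loop and produces a uniform bound $V(t) \leq C_\delta$ in terms of the initial data and $\|w\|_*$. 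Feeding $V \leq C_\delta$ back into the $y$-estimate turns $(\epsilon M_1/\eta_2)V(t)$ into an $O(\epsilon\delta)$ residual that is made smaller than $\sigma$ by further shrinking $\epsilon$, yielding~\eqref{eq:thmy}; an analogous treatment of the initial $y$-transient term and of the self-coupling residual in the $x$-estimate yields~\eqref{eq:thmx}. The main obstacle I expect is precisely this bootstrap: because the $y$-bound feeds a term proportional to $V(t)$ back into the $x$-bound, closing the loop requires the small-gain coefficient to be strictly below one, and quantifying how small $\epsilon$ must be taken in terms of $(\delta,\sigma)$ is the delicate step that fixes the singular-perturbation threshold $\epsilon^* = \epsilon^*(\delta,\sigma)$.
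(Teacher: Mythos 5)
Your proposal is correct, but it reaches the result by a genuinely different mechanism than the paper. The paper runs a continuation--contradiction argument: it fixes a radius $\delta_x$, takes the maximal interval $[0,T)$ on which $|x(t)-e|_\infty<\delta_x$, and on that interval bounds the coupling term $-\epsilon\tfrac{\partial h}{\partial x}f(x)$ merely by a class-$\mathcal{K}$ function $\gamma(\epsilon)$ (using only that $x$ is confined to a bounded set), then splits the convolution $\int_0^t e^{-\eta_1(t-s)}|y(s)|_*\,\diff s$ at a small time $\rho$ to kill the initial $y$-transient, and finally shows the resulting estimate strictly improves on $\delta_x$, contradicting finiteness of $T$. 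You instead exploit that Conditions~\eqref{eq:cntj}--\eqref{eq:cntj1} hold globally, so that $f(e)=0$ and the uniform bound $c$ on the partial derivatives give the \emph{global linear-growth} estimate $|{-\epsilon\tfrac{\partial h}{\partial x}f(x)}|_*\leq\epsilon M_1|x-e|_\infty$; this lets you close a genuine small-gain loop in $V(t):=\sup_{s\le t}|x(s)-e|_\infty$ once $\epsilon M_1\|G\|_{\infty,*}/(\eta_1\eta_2)\le 1/2$, obtain an a priori bound $V\le C_\delta$ (which also settles global existence), and then feed it back to turn the coupling residuals into $O(\epsilon\delta)$ terms absorbed by $\sigma$. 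Your explicit evaluation $\int_0^t e^{-\eta_1(t-s)}e^{-\eta_2 s/\epsilon}\,\diff s\le\epsilon/(\eta_2-\eta_1\epsilon)$ is in fact cleaner than the paper's $\rho$-splitting and yields the transient term as $O(\epsilon)$ directly. The trade-off: the paper's route (cf.\ its remark following Proposition~\ref{lem:y}) deliberately avoids any global growth hypothesis on $\tfrac{\partial h}{\partial x}f(x)$ and so would survive if \eqref{eq:cntj1} were weakened to hold only on bounded sets, whereas your small-gain bootstrap leans essentially on the global Lipschitz bound $c$; under the theorem's stated hypotheses, however, both arguments are valid and yield the same constants in \eqref{eq:thmx}--\eqref{eq:thmy}.
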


\begin{proof} 

For the given $(\delta, \sigma)$ and  $\bar \epsilon >0$, let $\delta_x > 0$ be a constant such that 
\be
\delta_x > \delta + \frac{\bar \epsilon ||G||_{\infty, *} \delta}{\eta_1 \eta_2} + \sigma.
\ee
Note that $\delta_x > \delta$. By $ |x(0) - e|_\infty \leq \delta $ and the fact that $x(t)$ is continuous, we can define $[0,T)$ with $T>0$ to be the maximal interval such that
\begin{align} \label{eq:maxT}
| x(t) - e |_\infty < \delta_x, \text{ for all } t \in [0, T).  
\end{align}
To show by contradiction that $T = \infty$ for $\epsilon$ sufficiently small, we first suppose $T$ is finite.
Pick $\epsilon \in (0, \bar \epsilon]$, using Propositions~\ref{prop:iss} and~\ref{lem:y} on $t \in [0, T)$ yields 
\[
|y(t)|_* \leq \exp \left(-\frac{\eta_2 t} {\epsilon} \right)  |y(0)|_* + \frac{ ||u_t||_*}{\eta_2} , \text{ for all } t \in [0, T), 
\]
where 
$
u(t) :=  -  \epsilon \frac{\partial h}{\partial x} f(x)   + \epsilon w. 
$
Since $x(t)$ is bounded on $t  \in [0, T)$ and the mappings $h$ and $f$ are $C^1$ w.r.t. $x$, there exists a class $\mathcal{K}$ function $\gamma: \R_{\geq 0} \to \R_{\geq 0}$ such that 
\[
\left |- \epsilon \frac{\partial h}{\partial x} f(x) \right|_* \leq \gamma(\epsilon) , \text{ for all } t  \in [0, T).
\]
Hence, 
$
|y(t)|_* \leq \exp \left(-\frac{\eta_2 t} {\epsilon} \right)  |y(0)|_* + \frac{ \gamma(\epsilon) + \epsilon ||w_t||_*}{\eta_2} $,  
for all $ t \in [0, T)$. This implies that
\be \label{eq:yboundnew}
\Vert y_t \Vert_* \leq  \delta +  \frac{ \epsilon \delta + \gamma(\epsilon)}{ \eta_2}, \text{ for all } t \in [0, T).
\ee
That is, both $x(t)$ and $y(t)$ are bounded on $t \in [0, T)$.

Pick $\rho \in [0, T) $. 
Using Lemma~\ref{lem:x} and Proposition~\ref{prop:iss}, together with the fact $f(e) = 0$, yields
\be \label{eq:long}
\begin{split}
|x(t)-e|_\infty & \leq  \exp(-\eta_1 t) |x(0)-e|_\infty \\
&+ ||G||_{\infty, *} \int_0^\rho \exp(-\eta_1 (\rho -s))  |y(s)|_* \diff s \\
&+ ||G||_{\infty, *} \int_\rho^t \exp(-\eta_1 (t -s))  |y(s)|_* \diff s\\
& \leq   \exp(-\eta_1 t) |x(0) -e|_\infty + \rho ||G||_{\infty, *}  || y_\rho||_*  \\
&+ \frac{||G||_{\infty, *} || y^\rho_t ||_* }{\eta_1}, \text{ for all } t \in [0, T).
\end{split}
\ee
Note that $ || y_\rho||_* \leq  || y_t ||_* $. 
Let $\rho$ be small enough such that 
\[
 \rho   ||G||_{\infty, *}  \left  ( \delta +  \frac{ \bar \epsilon \delta + \gamma( \bar \epsilon)}{ \eta_2} \right ) \leq \frac{\sigma}{2}.
\]
Then, Eqs.~\eqref{eq:yboundnew} and \eqref{eq:long} lead to
\[
\begin{split} \label{eq:long1}
  &|x(t)-e|_\infty \leq \exp(-\eta_1 t) |x(0)-e|_\infty +    \frac{||G||_{\infty,*} ||y_t^\rho||_*}{\eta_1} \\ 
  & + \frac{\sigma}{2} \\
  & \leq  \exp(-\eta_1 t) |x(0) -e|_\infty 
  + \frac{\sigma}{2}   \\
 &+  \frac{||G||_{\infty,*}}{\eta_1}   \bigg( 
\exp \left(-\frac{\eta_2 \rho}{\epsilon} \right)
|y(0)|_*  
+  \frac{ \gamma(\epsilon) + \epsilon ||w_t||_*}{\eta_2}  \bigg) ,
\end{split}
\]
for all $ t \in [0, T)$.
Note that the terms $\exp \left(-\frac{\eta_2 \rho}{\epsilon} \right)$ 
 and $\gamma(\epsilon)$ converge to zero as $\epsilon$ goes to zero. Therefore, there exists a small enough $\epsilon_1 > 0$, such that 
if $\epsilon \leq \min\{ \epsilon_1, \bar \epsilon \}$, then 
\[
|x(t)-e|_\infty
\leq \exp(-\eta_1 t) |x(0)-e|_\infty  +  \frac{\epsilon ||G||_{\infty,*} ||w_t||_* }{\eta_1 \eta_2}   + \sigma, 
\]
 for all $ t \in [0, T)$. 
From the definition of $\delta_x$, we have that $| x(t) -e|_\infty < \delta_x$ for all  $t \in [0, T)$ and 
\[
 \limsup_{t \to T} |x(t)-e|_\infty \leq \delta_x - (1- \exp(-\eta_1 T) ) \delta.
\]
By the continuity of $x(t)$ and the assumption that $T$ is finite, there exists $\Delta T > 0$ such that $|x(t) -e|_\infty < \delta_x$ for all $t \in [0, T + \Delta T)$, and this contradicts that $T$ is the maximal value such that \eqref{eq:maxT} holds. Hence, we have $T = \infty$,  and \eqref{eq:thmx} holds for all $t \geq 0$. 
Finally, pick $\epsilon_2$ such that $\gamma (\epsilon_2)/\eta_2 \leq  \sigma $ and let $\epsilon^* = \min \{\epsilon_1, \epsilon_2, \bar \epsilon \}$, then~\eqref{eq:thmx} and \eqref{eq:thmy} hold for all $t \geq 0$.   $~~~~ \square   $
\end{proof}

Inspired by Remark~\ref{re:betterbound} and using a similar argument as the proof of Theorem~\ref{thm:main}, a counterpart of~\eqref{eq:thmx} can be obtained as
\begin{align*}
|x(t)-e|_\infty
 &\leq \exp(-\eta_1 t) |x(0)-e|_\infty    \\
&~+  b(t) \epsilon ||G||_{\infty,*} ||w_t||_*   + \sigma, \text{ for all } t \geq 0.
\end{align*}
where $b(t) := \frac{ (1 -\exp(-\eta_1 t))(1-\exp(-\eta_2 t))  }{\eta_1 \eta_2}$. Note that this offers a more accurate bound on $|x(t)-e|_\infty$ compared to~\eqref{eq:thmx} especially for a short time period.

\begin{Remark}
The above proof shows that the only hard requirement on $\epsilon$ is $\epsilon \leq \bar \epsilon$, where $\bar \epsilon$ is specified in Proposition~2. This requirement ensures that the $y$-system is contractive, that is, it is related to a stability condition. This equivalently provides a lower bound for the control gains $k_i$ since $\epsilon := \frac{1}{k_{min}}$. Apart from this, a smaller $\epsilon$ leads to smaller inter-vehicular spacing errors. That is, the control performance can be improved but at the cost of larger control effort. 
\end{Remark}

Similar to the formulation of $\mathcal{L}_\infty$ string stability (see Def.~\ref{def:string} in Section~\ref{sec:Lstring}), we seek to derive a relation between $\max_i \{||x_i(t) - e_i||\}$ and $||w||_\infty$ in the next result.
It directly follows from Theorem~\ref{thm:main} and the definition of~$|\cdot|_*$ in~\eqref{eq:normstar}, and it shows that the effect of $||w||_\infty$ on  $\max_i \{||x_i(t) - e_i||\}$ is scaled by $O(\sqrt{ \left \lceil \frac{n}{r} \right \rceil})$ for a fixed $n$. That is, the effect of disturbance propagation can be reduced by increasing the communication range $r$.  

\begin{Corollary} \label{cor:sqrtw}
Consider the $(x,y)$-system~\eqref{eq:xsys}-\eqref{eq:ysys} where the mappings $d_i$ satisfy Conditions~\eqref{eq:cntj}-\eqref{eq:cntj1}, and the parameters $\bar k_i$ satisfy Condition~\eqref{eq:conlem2} for some $\bar \epsilon >0$. For any essentially bounded $w(t)$ and any bounded set $\Omega \subset \R^{2n}$, there exists $\epsilon^* > 0$, such that for any $\epsilon \in (0, \epsilon^*]$ and any initial condition in $\Omega$, we have
\begin{align}
||x_i-e_i|| \leq 
 |x(0)-e|_\infty   
+ &  \epsilon c \sqrt{ \left \lceil \frac{n}{r} \right \rceil }  ||w||_\infty   + \sigma , \nonumber  \\
& i = 1, \dots, n, \label{eq:coro}
\end{align}
for some $c , \sigma>0$. 
\end{Corollary}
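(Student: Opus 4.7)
The plan is to deduce Corollary~\ref{cor:sqrtw} directly from Theorem~\ref{thm:main} by bounding the two $*$-weighted quantities $\|w_t\|_*$ and $\|G\|_{\infty,*}$ in terms of $\infty$-norm data, exposing the $\sqrt{\lceil n/r\rceil}$ factor in the first and showing the second is independent of $m=\lceil n/r\rceil$, and then passing from $|x(t)-e|_\infty$ to the individual signal norms $\|x_i-e_i\|$ via the trivial estimate $|x_i(t)-e_i|\leq|x(t)-e|_\infty$ and a supremum over $t$.

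First I would convert the disturbance norm. From the definition \eqref{eq:normstar}, for any $a\in\R^n$ with the block partition $(a^1,\dots,a^m)$ used in Proposition~\ref{lem:y},
\[
|a|_* = \left(\sum_{i=1}^m D_{ii}^2\,|a^i|_\infty^2\right)^{1/2} \leq \bigl(\max_j D_{jj}\bigr)\sqrt{m}\,|a|_\infty.
\]
Applying this pointwise to $w(t)$ and taking essential suprema would yield $\|w_t\|_* \leq (\max_j D_{jj})\sqrt{\lceil n/r\rceil}\,\|w\|_\infty$, which is exactly where the dimensional factor in the conclusion comes from.

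Next I would bound $\|G\|_{\infty,*}$ independently of $m$. The same identity gives the reverse estimate $(\min_j D_{jj})\,|a|_\infty\leq|a|_*$. Since each row of $G$ contains at most two nonzero entries, each of magnitude one, $|Ga|_\infty\leq 2|a|_\infty$, and therefore
\[
\|G\|_{\infty,*} = \max_{a\neq 0}\frac{|Ga|_\infty}{|a|_*} \leq \frac{2}{\min_j D_{jj}}.
\]

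To close the argument, I would set $\delta := \max\{\sup_{(x_0,y_0)\in\Omega}|x_0-e|_\infty,\ \sup_{(x_0,y_0)\in\Omega}|y_0|_*,\ \|w\|_*\}$, which is finite since $\Omega$ is bounded and $w$ is essentially bounded (using the first estimate above on $\|w\|_*$), and invoke Theorem~\ref{thm:main} with this $\delta$ and any preselected $\sigma>0$ to obtain the corresponding $\epsilon^*$. Substituting the two norm bounds into \eqref{eq:thmx}, using $|x_i(t)-e_i|\leq|x(t)-e|_\infty$, and taking the essential supremum over $t\geq 0$ would give
\[
\|x_i-e_i\| \leq |x(0)-e|_\infty + c\,\epsilon\,\sqrt{\lceil n/r\rceil}\,\|w\|_\infty + \sigma, \quad i=1,\dots,n,
\]
with the explicit constant $c = \tfrac{2\,\max_j D_{jj}}{(\min_j D_{jj})\,\eta_1\eta_2}$.

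The only subtle point I foresee is the very first estimate: one has to exploit the Euclidean structure of $|\cdot|_{D,2}$ on $\R^m$ rather than some coarser $\infty$-norm estimate, so that the dimensional penalty scales like $\sqrt{m}$ and not like $m$. Once that bound is in place, everything else is a straightforward application of Theorem~\ref{thm:main} together with elementary matrix-norm bookkeeping for the bidiagonal matrix $G$.
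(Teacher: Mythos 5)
Your proposal is correct and follows essentially the same route as the paper: invoke Theorem~\ref{thm:main} and then bound $\|w\|_*\leq \bar q\sqrt{\lceil n/r\rceil}\,\|w\|_\infty$ via the Euclidean structure of $|\cdot|_{D,2}$, absorbing the remaining factors into $c$. The only cosmetic difference is that you additionally estimate $\|G\|_{\infty,*}\leq 2/\min_j D_{jj}$, whereas the paper simply keeps $\|G\|_{\infty,*}$ inside the constant $c:=\bar q\,\|G\|_{\infty,*}/(\eta_1\eta_2)$; both are valid since the corollary only asserts existence of some $c>0$.
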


\begin{proof}
By Theorem~\ref{thm:main}, a small enough  $\epsilon$ ensures that   
\begin{align*}
||x_i-e_i|| \leq 
 |x(0)-e|_\infty  
+ \frac{ \epsilon ||G||_{\infty, *}  ||w||_*}{\eta_1 \eta_2}   + \sigma ,
\end{align*}
for all $i = 1, \dots, n$. 
That is, the effect of the disturbance $w(t)$ on every $x_i$ is related to the norm~$|\cdot|_*$ defined in~\eqref{eq:normstar}. Let $q_i>0$, $i = 1, \dots, m$, be  the $i$th diagonal entry of $D$, and define $\bar q :=\max_i \{q_i\}$. Then for $a \in \R^n$ with the decomposition as in Proposition~\ref{lem:y}, 
\begin{align*}
|a|_* = \left ( \sum \limits_{i=1}^m \left ( q_i  |a^i|_\infty \right)^2 \right)^{\frac12} 
 \leq \sqrt{ \left \lceil \frac{n}{r} \right \rceil} \bar q |a|_\infty. 
\end{align*}
 This implies that $||w||_* \leq \sqrt{ \left \lceil \frac{n}{r} \right \rceil} \bar q ||w||_\infty$. Therefore, Eq.~\eqref{eq:coro} holds with  $c:= \frac{  \bar q  ||G||_{\infty, *}  }{\eta_1 \eta_2}$.   $~~~~ \square   $
\end{proof}

As shown in Example~\ref{exa:1} and Corollary~\ref{cor:sqrtw}, the parameter $c$ in~\eqref{eq:coro} also depends on~$n$ and~$r$ implicitly. The exact relation between $c$ and $n$ (or $r$) is not the focus of our current work. However, for a fixed $n$, we can always find a maximal value of $c$  such that \eqref{eq:coro} holds for all $r \in \{1, \dots, n\}$.   

Note from \eqref{eq:coro} that a $\mathcal{L}_\infty$ string stability-like result is achieved if $r = n$. 
Inspired by this, the next section proposes a control design ensuring $\mathcal{L}_\infty$ string stability of the platooning system~\eqref{eq:platoon}. Specifically, it is related to an analogue of Theorem~\ref{thm:main} but with $\sigma = 0$.

\section{A $\mathcal{L}_\infty$ string stability result} \label{sec:Lstring}

To show that the proposed framework has the potential to analyze other platooning protocols, this section demonstrates a $\mathcal{L}_\infty$ string stability result by using the following control law 
\be \label{eq:placonv0}
\begin{split}
\frac{1}{m_i} u_i &= - k_i (v_i - d_i - v_0) + \frac{\partial d_i}{\partial x_i} (v_{i-1} - v_i) \\ &~+ \frac{\partial d_i}{\partial x_{i+1}}  (v_i - v_{i+1}), ~i = 1, \dots, n.
\end{split}
\ee
Comparing~\eqref{eq:placonv0} to \eqref{eq:placon} with $r=n$,
the term $\sum_{j=1}^{n-1} d_{i-j}$ (i.e., $\sum_{j=1}^{i-1} d_j$)
is dropped here. As a result, each local controller only depends on the information of the immediate predecessor and follower and the velocity of the leading vehicle $v_0(t)$. Compared to the control design given by~\cite{monteil2019string}, the relative position w.r.t. the leading vehicle, i.e., $p_0(t) - p_i(t)$, $i =1 , \dots, n$, are not required. 

The subsequent analysis show that 
the control law~\eqref{eq:placonv0} ensures $\mathcal{L}_\infty$ string stability of the platoon, which is formally defined  below.

\begin{Definition} \label{def:string}
Consider the platooning system~\eqref{eq:platoon} and define the lumped error state 
\be \label{eq:defxi}
\xi:= \begin{bmatrix}  (x - e)^T &  v_1 - v_0 &  \cdots &  v_n - v_0
\end{bmatrix}^T.
\ee
The system~\eqref{eq:platoon} is called $\mathcal{L}_\infty$ string stable if there exist class $\mathcal{K}$ functions $\alpha_i$, $i = 1,  2$, such that for any initial condition $\xi(0) \in \R^{2n}$, and any essentially bounded $\dot v_0(t) $ and $ \theta(t)$, 
\be \label{eq:defstr}
\Vert x_i - e_i \Vert \leq \alpha_1 ( | \xi(0) |_\infty ) + \alpha_2( \Vert w \Vert_\infty ),  
\ee
 for all $ i \in \{ 1, \dots, n\}$, and all~$ n \in \N$.
\end{Definition}

\begin{Remark} 
If $\theta_i(t) \equiv 0$ for all $i$, then Definition~\ref{def:string} recovers the $\mathcal{L}_\infty$ string stability given in \cite[Def. 1]{ploeg2013lp}. 
Note that Condition~\eqref{eq:defstr} is independent of the length of the platoon.
\end{Remark}

Similar to the derivation of~\eqref{eq:xsys}-\eqref{eq:ysys},
the closed-loop system of~\eqref{eq:platoon} with the control laws~\eqref{eq:placonv0} is obtained as
\begin{align} 
\dot x & = f_1(x) + G z, \label{eq:redu1} \\
 \frac{\diff z }{\diff \tau} & =  - \diag ( \bar k_i ) z + \epsilon w,  \label{eq:bola1}
\end{align}
where
\[
f_1(x)  := \begin{bmatrix}
 - d_1(x_1, x_2) \\ d_1(x_1, x_2) - d_2(x_2, x_3) \\ \vdots \\ d_{n-1}(x_{n-1}, x_n) - d_n(x_n)
\end{bmatrix}.   
\]

\begin{Theorem}
Consider the platooning system~\eqref{eq:platoon} with the distributed controllers in~\eqref{eq:placonv0}, i.e., the system~\eqref{eq:redu1}-\eqref{eq:bola1}. Pick mappings $d_i$ such that Condition~\eqref{eq:cntj1} holds, and there exists $\eta>0$ such that
\begin{align} \label{eq:cntjx1}
& \frac{\partial d_i}{\partial x_i} > 0, ~
 \frac{\partial d_i }{\partial x_{i+1}} <  0,~i = 1, \dots, n  \nonumber \\
&  \frac{\partial d_1}{\partial x_1} + \frac{\partial d_1 }{\partial x_2} \geq \eta, \\
& \frac{\partial d_i}{\partial x_i} + \frac{\partial d_i }{\partial x_{i+1}} \leq \frac{\partial d_{i+1} }{\partial x_{i+1}} + \frac{\partial d_{i+1} }{\partial x_{i+2}} -\eta, ~i = 1, \dots, n-1, \nonumber
\end{align}
for all $x \in \R^n$ and all $i \in \{1, \dots, n\}$, then the platooning system~\eqref{eq:platoon} is $\mathcal{L}_\infty$ string stable. 
\end{Theorem}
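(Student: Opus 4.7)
The plan is to follow the singular-perturbation flavor of Theorem~\ref{thm:main}, but equip both subsystems with the uniform norm $|\cdot|_\infty$ so that the resulting bounds are independent of the platoon length~$n$. The two key ingredients are contractivity of the slow system~\eqref{eq:redu1} in $|\cdot|_\infty$ with rate $\eta$ uniform in $n$, together with the observation that the coupling matrix $G$ satisfies $\|G\|_\infty = 2$ regardless of~$n$. The fast system~\eqref{eq:bola1} is already diagonal and hence trivially contractive in every $p$-norm with rate $\bar k_{\min}/\epsilon$; no additional smallness condition on $\epsilon$ is needed here, in contrast to Theorem~\ref{thm:main}.

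The heart of the argument is to verify that $\mu_\infty(J_1(x)) \leq -\eta$ for all $x \in \R^n$, where $J_1(x) := \partial f_1/\partial x$. Setting $a_i := \partial d_i/\partial x_i > 0$ and $b_i := \partial d_i/\partial x_{i+1} < 0$ (with $b_n = 0$ since $d_n$ depends only on $x_n$), the matrix $J_1$ is tridiagonal with $(i,i)$-entry $b_{i-1} - a_i$ (setting $b_0 := 0$), sub-diagonal entries $a_{i-1}$, and super-diagonal entries $-b_i$. Since every diagonal entry is negative, the row-sum formula $\mu_\infty(J_1) = \max_i\{J_{ii} + \sum_{j \neq i}|J_{ij}|\}$ produces the telescoping expression $(a_{i-1} + b_{i-1}) - (a_i + b_i)$ for the $i$th row (with the convention $a_0 + b_0 = 0$ and using $b_n = 0$). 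The first line of~\eqref{eq:cntjx1} then handles $i=1$ via $-a_1 + |b_1| = -(a_1+b_1) \leq -\eta$, while the monotone-gradient inequality handles $i \geq 2$; together they deliver $\mu_\infty(J_1(x)) \leq -\eta$ uniformly in $x$ and~$n$.

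Equipped with the two contractivity estimates, Proposition~\ref{prop:iss} applied to~\eqref{eq:redu1} (treating $Gz$ as an additive input) yields
\begin{equation*}
|x(t)-e|_\infty \leq \exp(-\eta t)|x(0)-e|_\infty + \frac{\|G\|_\infty}{\eta}\,\|z_t\|_\infty,
\end{equation*}
and applied to the decoupled linear system~\eqref{eq:bola1} gives $\|z_t\|_\infty \leq |z(0)|_\infty + \frac{\epsilon}{\bar k_{\min}}\|w_t\|_\infty$ for all $t \geq 0$. Since $d_i(e_i, e_{i+1}) = 0$, the mean value theorem together with~\eqref{eq:cntj1} gives $|d_i(x(0))| \leq 2c\,|x(0)-e|_\infty$, whence
\begin{equation*}
|z(0)|_\infty \leq \max_i |v_i(0)-v_0(0)| + 2c\,|x(0)-e|_\infty \leq (1+2c)\,|\xi(0)|_\infty.
\end{equation*}
Composing the two ISS estimates gives an entry-wise bound of the form $\|x_i - e_i\| \leq \alpha_1(|\xi(0)|_\infty) + \alpha_2(\|w\|_\infty)$ with $\alpha_1, \alpha_2$ linear class-$\mathcal{K}$ functions whose coefficients depend only on $\eta$, $c$, $\bar k_{\min}$, and~$\epsilon$, matching Definition~\ref{def:string}. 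The sole delicate step is the row-sum telescoping, which is precisely where~\eqref{eq:cntjx1} enters in an essential way; the rest reduces to direct applications of Proposition~\ref{prop:iss}.
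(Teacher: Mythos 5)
Your proposal is correct and follows essentially the same route as the paper: contractivity of the slow system in $|\cdot|_\infty$ with rate $\eta$ via the row-sum formula, trivial contractivity of the decoupled diagonal fast system, cascading the two ISS estimates from Proposition~\ref{prop:iss}, and absorbing $|z(0)|_\infty$ into $|\xi(0)|_\infty$ via Condition~\eqref{eq:cntj1}. The only difference is that you spell out the telescoping row-sum computation and the $n$-independence of $\|G\|_\infty$, which the paper asserts without detail.
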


\begin{proof} Define~$J_1(x) := \frac{\partial}{\partial x}f_1(x)$. Condition~\eqref{eq:cntjx1} ensures that $\mu_\infty(J_1(x)) \leq -\eta$ for all $x \in \R^n$. 
Additionally, the Jacobian of the $z$-system~\eqref{eq:bola1} is  $-\diag ( \bar k_i )$, and $\mu_\infty(-\diag ( \bar k_i )) = -1$.

Note that the $z$-system~\eqref{eq:bola1} is independent of $x$. That is, the $(x, z)$-system~\eqref{eq:redu1}-\eqref{eq:bola1} is a series connection of two contractive systems.
Recall that such a system is also contractive (see e.g., \cite{ofir2022sufficient}). 
Specifically, this implies that $x(t)$ and $z(t)$ are uniformly bounded if $w(t) \in \mathcal{L}_\infty$. 
Using Proposition~\ref{prop:iss} leads to 
\begin{align*}
&|x(t) - e|_\infty
 \leq \exp(-\eta_1 t) |x(0)|_\infty  +  \frac{||G||_\infty }{\eta } ||z_t||_\infty , \\
&|z(t)|_\infty  \leq \exp \left(-\frac{ t} {\epsilon} \right)  |z(0)|_\infty +   \epsilon ||w_t||_\infty,  \text{ for all } t \geq 0 .
\end{align*} 
This implies that
$
||z||_\infty \leq   |z(0)|_\infty 
+  \epsilon || w||_\infty. 
$
Hence, 
\be \label{eq:xlinfty}
\begin{split}
||x-e||_\infty
& \leq  |x(0) -e|_\infty  +  \frac{||G||_\infty }{\eta } (|z(0)|_\infty 
+   \epsilon  || w||_\infty).
\end{split}
\ee
Note that $ ||x_i -e_i|| \leq ||x-e||_\infty $ for all $i$.
Recall that $z_i := v_i - d_i - v_0$, and $d_i(e) = 0$. By Condition~\eqref{eq:cntj1}, the term $  | x(0) -e|_\infty + \frac{||G||_\infty  |z(0)|_\infty } {\eta} $ can be viewed as a weighted vector norm for $\xi(0)$. That is, there exists a constant $c'$  such that 
\be \label{eq:xinorm}
| x(0) -e|_\infty + \frac{||G||_\infty  |z(0)|_\infty } {\eta} \leq c' |\xi(0)|_\infty.
\ee
Combining~\eqref{eq:xlinfty} and~\eqref{eq:xinorm} proves that Condition~\eqref{eq:defstr} in Definition~\ref{def:string} holds and this completes the proof.   $~~~~ \square   $
\end{proof}

\begin{Remark}
Note that Condition~\eqref{eq:cntjx1} is less easier to satisfy than Condition~\eqref{eq:cntj}. This is due to that the former is related to a heterogeneous control design, that is, letting each vehicle
have different control settings. 
For example, let $d_i$'s be the linear form in~\eqref{eq:lineard}. 
Condition~\eqref{eq:cntjx1} translates to:
$
\ell_1^p  > \ell_1^f, ~~
\ell_i^p - \ell_i^f  < \ell_{i+1}^p - \ell_{i+1}^f 
$,
and it is satisfied if we fix $\ell_i^p$ [$\ell_i^f$] as the same constant for all $i$, and $\ell_i^f$ [$\ell_i^p$] as a decreasing [increasing] sequence w.r.t. $i$. Intuitively speaking, this means that the vehicles at the tail [head] part should have a stronger ``connection'' with the predecessor [follower]. 
Note that this may be unpractical for large vehicular platoons since the control parameters can become extremely large/small.
A similar design is reported by
\cite{noniden2004Khatir} who considered a unidirectional platooning protocol and 
showed that string stability is ensured if the control gains of local PID controllers increase w.r.t. $i$. 
\end{Remark}

\section{Numerical validation} \label{sec:sim}
The results in Section~\ref{sec:main} are illustrated via simulations for a platooning system consisting of $11$ vehicles using the control protocol~\eqref{eq:placon}  with
\[
 d_i(x_i, x_{i+1}) = \ell_i g_i(x_i, x_{i+1}) + b_i (x_i - e_i), ~ i = 1, \dots, 10, \\
\]
where 
$
g_i(x_i, x_{i+1}) = \tanh (\ell_i^p(x_i - e_i) - \ell_i^f(x_{i+1} - e_{i+1}) ). 
$

We consider three cases with communication range $r =1, 3$, and $10$. We use the same settings of control parameters for the three cases for a comparison study. 
The control gains $\ell_i, \ell_i^p$, $\ell_i^f$, and $b_i$ are selected such that~Conditions~\eqref{eq:cntj}-\eqref{eq:cntj1} are satisfied. Specifically, we pick 
$
\ell_i = 0.5, \ell_i^p = 0.18, ~\ell_i^f = 0.18,~b_i = 0.1,
$
for all $i = 1, \dots, 10.$ 
The control gains $k_i$'s in~\eqref{eq:placon} are chosen based on Proposition~\ref{lem:y}, and we set
$
k_1 = \cdots = k_{10} = 5.
$

The simulation results with $r = 1, 3$, and $10$ are illustrated in Fig.~\ref{fig:pla}, where we set $e_1 =\cdots = e_{10} = 10 \operatorname{m}$. To show a convergence result, we pick the disturbances $\omega_i(t)$ as vanishing signals.
The time-varying velocity of the leading vehicle is given as
\[
v_0(t) =  
\begin{cases}
   15, & t \in [0, 5), \\
   5 + 2 t , & t \in [5, 15), \\
   35, & t \in [15, 25), \\
   85-2t, & t \in [25, 35), \\
   15,  & t \in [35, 45), \\
   82.5 - 1.5 t, & t \in [45, 55), \\
   0,  & t \in [55, 65), \\
   -97.5 + 1.5 t , & t \in [65, 75), \\
   15, & t \in [ 75, \infty).
\end{cases}
\]
The external disturbances are 
\[
\frac{\theta_i}{m_i} = c_i \exp(-0.02t) \sin(t) (\text{or } \cos(t)) , ~i= 1, \dots, 10,
\]
where $c_i \in [-3, 3]$.
Fig.~\ref{fig:pla} shows that the bounds for $\max_i \{ ||x_i - e_i|| \}$ are reduced by increasing the communication range $r$. It is also observed that the effect of $\dot{v}_0$  attenuates faster along the string with a larger $r$.

\begin{figure}[t]
\begin{center}
\includegraphics[scale=0.25]{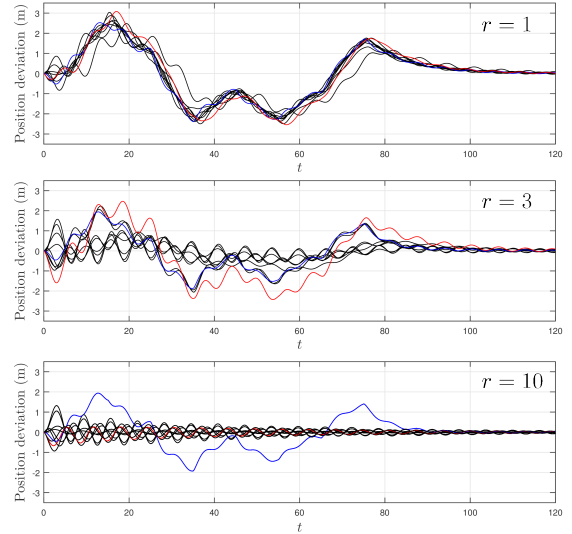}
\caption{Position deviations, i.e., $x_i(t) -e_i$, $i = 1, \dots, 10$, of the platooning systems with communication range $r = 1, 3, 10$. (Blue line: $x_1(t) -e_1$, red line: $x_{10}(t) -e_{10}$.)  }\label{fig:pla}
\end{center}
\end{figure}

\section{Conclusions}
This paper presents a contraction theory framework for analyzing disturbance propagation in nonlinear vehicular platoons with different communication ranges. 
The disturbances under concern include both the external disturbances acting on each vehicle and the acceleration of the leading vehicle. For a proposed unified nonlinear control protocol considering variable communication range $r$ in a vehicular platoon with fixed length $n+1$, we show that the effect of disturbances on the maximal overshoot of spacing errors is scaled by $O(\sqrt{ \left \lceil \frac{n}{r} \right \rceil})$.
Considering that the platoon length can change during merging/splitting real world scenarios, our result may suggest that communicating the information of the platoon length is useful in practical applications, since this allows updating the communication range adaptively to satisfy certain constraints on spacing errors.
We also propose a heterogeneous control law which
only relies on communications between consecutive vehicles and broadcasting the velocity of the leading vehicle such that $\mathcal{L}_\infty$ string stability is achieved.  

\bibliographystyle{abbrvnat}        
\bibliography{platoon}
	
\end{document}